\numberwithin{equation}{section}
\newtheorem{theorem}{Theorem}[section]
\newtheorem{remark}[theorem]{Remark}
\newtheorem{definition}[theorem]{Definition}
\newtheorem{lemma}[theorem]{Lemma}
\newtheorem{observation}[theorem]{Observation}
\theoremstyle{nonumberplain}
\newtheorem{proof}{Proof}
\crefname{proof}{proof}{proofs}
\renewcommand{\epsilon}{\ensuremath\varepsilon}
\renewcommand{\phi}{\ensuremath{\varphi}}
\renewcommand{\epsilon}{\ensuremath{\varepsilon}}
\renewcommand{\theta}{\ensuremath{\vartheta}}
\newcommand{\xor}{\oplus}
\newcommand{\matousek}{Matou\v{s}ek}
\newcommand{\gartner}{G\"artner}
\newcommand{\szabo}{Sz\'abo}
\DeclareMathOperator{\sign}{sign}
\title{A Characterization of the Realizable \matousek{} Unique Sink Orientations}
\author{Simon Weber\thanks{Institute of Theoretical Computer Science, Department of Computer Science, ETH~Zurich, Switzerland} \and Bernd G\"artner\footnotemark[1]}
\date{\today}
\begin{document}
\maketitle

\begin{abstract}
The \matousek{} LP-type problems were used by \matousek{} to show that the Sharir-Welzl algorithm may require at least subexponential time~\cite{matousek1994lowerbound}.
Later, \gartner{} translated this result into the language of Unique Sink Orientations (USOs) and introduced the \emph{\matousek{}~USOs}, the USOs equivalent to \matousek{}'s LP-type problems. He further showed that the Random Facet algorithm only requires quadratic time on the \emph{realizable} subset of the \matousek{}~USOs, but without characterizing this subset~\cite{gaertner2002simplex}.
In this paper, we deliver this missing characterization 
and also provide concrete realizations for all realizable \matousek{} USOs. Furthermore, we show that the realizable \matousek{} USOs are exactly the orientations arising from simple extensions of cyclic-P-matroids~\cite{fukuda2013subclass}.
\end{abstract}

\newpage
\section{Introduction}

\emph{Unique Sink Orientations (USOs)} are orientations of the $n$-dimensional hypercube graph, such that every subcube has exactly one sink. USOs were introduced to combinatorially abstract various algebraic and geometric problems, in particular the \emph{P-matrix linear complementarity problem (P-LCP)} \cite{stickney1978digraph,szabo2001usos}. If there is an instance of the P-LCP inducing a certain USO, we call the USO \emph{realizable}. We also call such an orientation a \emph{P-cube}.

The algorithmic problem associated with a USO is that of finding the unique sink of the whole cube. Its complexity remains unknown both for general USOs and for the subclass of realizable USOs. A promising candidate algorithm is \emph{Random Facet}, but also its runtime is unknown. On acyclic USOs, Random Facet requires $\exp(\Theta(\sqrt{n}))$ vertex evaluations~\cite{gaertner2002simplex}. For cyclic USOs, the upper bound is lost and only the lower bound of $\exp(\Omega(\sqrt{n}))$ remains. This lower bound is due to \matousek{} and \gartner{} and uses a concrete class of acyclic USOs that we now call the \emph{\matousek{}~USOs}~\cite{gaertner2002simplex,matousek1994lowerbound}.

\gartner{} showed that Random Facet is fast on all realizable \matousek{} USOs~\cite{gaertner2002simplex}, which means that for P-cubes, there is also no good lower bound on the runtime of Random Facet. In his proof of this statement, \gartner{} uses only a weak necessary condition for realizability. In this paper, we will characterize the subset of realizable \matousek{} USOs exactly, and show that this condition turns out to be sufficient for \matousek{}~USOs. Furthermore, we give concrete realizations for all USOs in this subset.

\emph{Oriented matroids} are a common combinatorial object to abstract properties of a wide collection of concepts, such as arrangements of {(pseudo\nobreakdash-)hyperplanes}, arrengements of vectors, or directed graphs. \emph{P-matroids} are a subclass of oriented matroids which can be viewed as a combinatorial abstraction of P-matrices, and their \emph{extensions} generalize instances of the P-LCP~\cite{todd1984matroids}. Just like a USO, an extension of a P-matroid does not have to be realizable by a P-LCP instance. There is a natural translation from P-matroid extensions to USOs~\cite{klaus2012phd}, with realizability carrying over.

Even though extensions of P-matroids and USOs are two different combinatorial abstractions of the same algebraic problem (P-LCP) with a natural link, they are mostly researched in isolation. In this work we look at the \emph{cyclic-P-matroids}, a class of realizable P-matroids introduced by Fukuda, Klaus, and Miyata~\cite{fukuda2013subclass,klaus2012phd}, and we will explore the USOs corresponding to \emph{simple extensions} of them.
In doing so, we aim to bridge the two fields of oriented matroid theory and Unique Sink Orientations.

\section{Preliminaries}
\subsection{Unique Sink Orientations}

\paragraph{Hypercube orientations.} An \emph{orientation} of a hypercube with dimension set $[n]$ is a function assigning each vertex of the cube its \emph{outmap}, i.e., the set of dimensions for which the edges adjacent to the vertex are pointing away from it. Vertices of the cube are characterized by subsets of $[n]$. More formally, with $P([n])$ being the power set of $[n]$, an orientation $o$ is a function $$o:P([n])\rightarrow P([n]),$$ such that for all vertices $v\in P([n])$ and dimensions $d\in [n]$, $d\in o(v)$ if and only if $d\not\in o(v\xor\{d\})$.

We call a subcube of a hypercube a \emph{face}. A subcube of dimension $n-1$ is also called a \emph{facet}. The set $\{v\in P([n]):d\in v\}$ is called the \emph{upper $d$-facet}, and the opposite facet is called the \emph{lower $d$-facet}.

\paragraph{Unique sink orientations.} An orientation $o$ is a USO if for each face $F$ spanned by some set of dimensions $D_F\subseteq [n]$, there is exactly one $v\in F$ with $o(v)\cap D_F = \emptyset$. Alternatively, the \szabo{}-Welzl condition~\cite{szabo2001usos} says that an orientation is a USO if and only if $$\forall v,w\in P([n]): \big(v\xor w\big)\cap \big(o(v)\xor o(w)\big)\not=\emptyset.$$
In other words, $o$ is a bijection when constrained to any face.

\paragraph{\matousek{} USOs.} A \matousek{} USO $m$ is a USO characterized by a directed graph $G=([n],E)$ with a loop $(d,d)\in E$ for every $d\in[n]$, but no other cycles. The \matousek{} USO is built from the graph using the conditions $m(\emptyset)=\emptyset$ and
\begin{gather*}
\forall d,d'\in [n],\forall v\in P([n]):\\
\Big(\big(d'\in m(v)\big) \xor \big(d'\in m(v\xor d)\big)\Big)\Leftrightarrow (d,d')\in E.
\end{gather*}
This uniquely defines $m$, as the direction of every edge in the hypercube can be determined by its relative location to the parallel edge adjacent to the vertex $\emptyset$. We call $G$ the \emph{dimension influence graph} of $m$ and if $(d,d')\in E$, we also say $d$ \emph{influences} $d'$. Intuitively, $d$ influencing $d'$ means that when walking along an edge in dimension $d$, the outmap of the vertices changes in dimension $d'$. An example of a dimension influence graph and a \matousek{}~USO can be seen in \Cref{fig:matousekexample}.

\begin{figure}[htbp]
\centering
\includegraphics[]{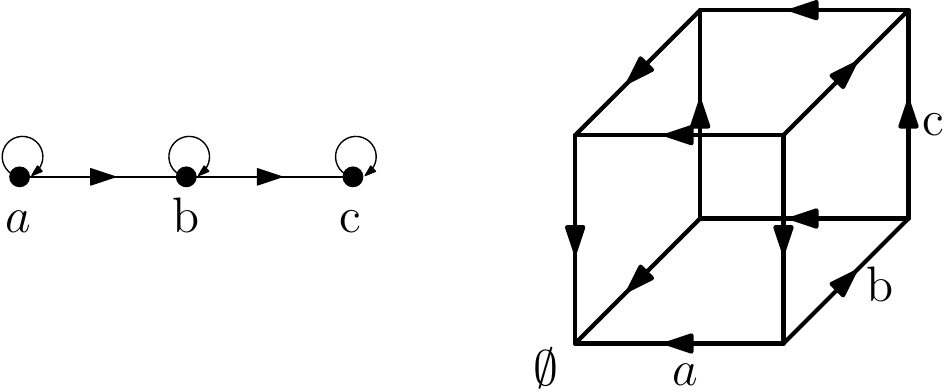}
\caption{A dimension influence graph and the \matousek{} USO characterized by it.}\label{fig:matousekexample}
\end{figure}

It can be seen rather easily that \matousek{}~USOs are indeed USOs. Assume two vertices $v,w$ fail the \szabo{}-Welzl condition, i.e., they have the same outmaps within the dimensions $v\xor w$. In this case, the induced subgraph of $G$ on the dimensions $v\xor w$ would need to have an even in-degree at every vertex, which would require $G$ to contain a cycle apart from the loops.

\paragraph{Orientation isomorphism.} Two orientations $o,o'$ are called \emph{isomorphic}, if there is a set $F\subseteq [n]$ and a permutation $\pi:[n]\rightarrow [n]$, such that for all $v\in P([n])$,
$$\pi(o(v))=o'(\pi(v\xor F)).$$
In other words, $o'$ can be obtained by mirroring $o$ along all dimensions in $F$ and then reordering the dimensions according to $\pi$. Such isomorphisms preserve realizability and the runtime of most sink-finding algorithms, including Random Facet.

For \matousek{} USOs $m$, switching the direction of all edges in some dimension $d$ is equivalent to applying the isomorphism with $\pi=\text{id}$ and $F$ such that $m(F)=\{d\}$. After either of these operations, the outmap of vertex $\emptyset$ is $\{d\}$ and the way the dimensions influence each other is unchanged, therefore they lead to the same USO. Under isomorphism, we can thus drop the condition of $m(\emptyset)=\emptyset$ for \matousek{} USOs.
We call a USO isomorphic to a \matousek{} USO a \emph{\matousek{}-type USO}.

\subsection{P-Matrix Linear Complementarity Problem}

An instance of the P-matrix linear complementarity problem (P-LCP) is given by a P-matrix $M\in\mathbb{R}^{n\times n}$ and a vector $q\in \mathbb{R}^n$. The task is to find vectors $w,z\in\mathbb{R}^n$ with $w,z\geq \mathbf{0}$, such that $w-Mz=q$, and $w$ and $z$ are complementary, i.e., $w^Tz=0$.

We call a pair of vectors $(w,z)$ with $$w-Mz=q\text{ and }\forall i: w_i=0\vee z_i=0$$ a \emph{candidate} solution. $M$ being a P-matrix guarantees that there is a unique candidate solution for each choice of the set $B=\{i:w_i=0\}$, assuming \emph{non-degeneracy} of $q$ for $M$.

A non-degenerate P-LCP instance $(M,q)$ realizes a USO $o$ in the following way: For each vertex $B\in P([n])$, $o(B)$ contains the indices $i$ for which $w_i,z_i\leq 0$ in the unique candidate solution corresponding to $B$.

\subsection{P-Matroids}

An \emph{oriented matroid} can abstractly describe the structure of many different objects, such as vector arrangements, hyperplane arrangements, or directed graphs. Furthermore, an oriented matroid can itself be described using a number of equivalent axiomatizations, such as vector axioms, circuit axioms, or chirotope axioms. In this work we focus on vector arrangements and circuit axioms. For a more complete overview over oriented matroids, we refer the reader to \cite{bjoerner1999orientedmatroids}.

\paragraph{Oriented matroids.} For a \emph{ground set} $E$, a \emph{signed set} on $E$ is a pair $(S^+,S^-)$ with $S^+\cap S^-=\emptyset$ and $S^+,S^-\subseteq E$. We call the union $\underline{S}=S^+\cup S^-$ the \emph{support} of $S$. We write $-S$ for the signed set $-S:=(S^-,S^+)$.

An oriented matroid in circuit representation is a pair $\mathcal{M}=(E,\mathcal{C})$ where $\mathcal{C}$ is a collection of signed sets on $E$, the so-called \emph{circuits} of $\mathcal{M}$.\\
$\mathcal{C}$ must satisfy the circuit axioms:\begin{itemize}
\item[(C0)] $\emptyset\not\in \mathcal{C}$
\item[(C1)] Symmetry: $\forall C\in\mathcal{C}: -C\in \mathcal{C}$
\item[(C2)] Incomparability: $\forall X,Y\in\mathcal{C}: \underline{X}\subseteq\underline{Y}\Rightarrow (X=Y \vee X=-Y)$
\item[(C3)] Weak elimination: $\forall X,Y\in\mathcal{C}$ with $X\not=-Y$, and $\forall e\in X^+\cap Y^-$, there exists a $Z\in\mathcal{C}$, such that
\begin{flalign*}
& Z^+\subseteq(X^+\cup Y^+)\backslash\{e\}\text{ and}&&\\
& Z^-\subseteq(X^-\cup Y^-)\backslash\{e\}.&&
\end{flalign*}
\end{itemize}

A \emph{basis} of $\mathcal{M}$ is an inclusion-maximal set that does not contain any circuit. All bases of an oriented matroid have the same size, called the matroid's \emph{rank}. For any basis $B$ and any element $e\in E-B$, there is a unique \emph{fundamental circuit} $C(B,q)$ with $\underline{C(B,q)}=B\cup\{q\}$ and $q\in C(B,q)^+$.

A vector configuration (or matrix) $V\in \mathbb{R}^{d\times n}$ gives rise to an oriented matroid $\mathcal{M}$ with $E=[n]$ and $\mathcal{C}$ being the collection of inclusion-minimal linear dependencies of columns of $V$, i.e., $$\mathcal{C}=\{\sign x: Vx=0\;\wedge \not\exists x':\left(\underline{x'}\subset\underline{x}\wedge Vx'=0\right)\},$$
where $\sign x$ is the signed set $\big(\{i:x_i>0\},\{i:x_i<0\}\big )$.
We say that $V$ \emph{realizes} $\mathcal{M}$.

\paragraph{P-matroids.} A \emph{P-matroid} is an oriented matroid $\mathcal{M}=(E_{2n},\mathcal{C})$ with ground set $E_{2n}=[2n]$. $E_{2n}$ is split in two parts, $S=[n]$ and $T=[2n]\backslash [n]$, $S$ being a basis of $\mathcal{M}$. Elements $i$ and $i+n$ are called \emph{complementary}, together they form a \emph{complementary pair}. For any $i$, its \emph{complementary element} is denoted by $\overline{i}$. Any set of $n$ elements not containing a complementary pair is a basis. $\mathcal{M}$ is a P-matroid if there is no \emph{almost-complementary} \emph{sign-reversing circuit}, i.e., no circuit $C\in\mathcal{C}$ such that $\underline{C}$ contains exactly one complementary pair $(i,i+n)$ and $i$ and $i+n$ have different signs in $C$~\cite{todd1984matroids}.

An \emph{extension} $\hat{\mathcal{M}}=(\hat{E},\hat{\mathcal{C}})$ of a matroid $\mathcal{M}=(E,\mathcal{C})$ is an oriented matroid with ground set $\hat{E}=E\cup \{q\}$, such that the \emph{deletion minor} $\hat{\mathcal{M}}\backslash q=(E,\{X:X\in \hat{\mathcal{C}}\text{ and }X_q=0\})$ is equal to $\mathcal{M}$.

Given an extension of a P-matroid $\hat{M}=(\hat{E_{2n}},\hat{\mathcal{C}})$, one can obtain the associated USO $u$ using the following procedure~\cite{klaus2012phd}: For each vertex $v\in P([n])$, determine the fundamental circuit $$C_v := C(\{i:i\in [n]-v\}\cup\{i+n:i\in v\},\;q)$$
and if $i\in C_v^-$ or $i+n\in C_v^-$, add $i$ to $u(B)$.

For a P-matrix $M\in\mathbb{R}^{n\times n}$, the \emph{associated P-matroid} is the oriented matroid realized by $\begin{bmatrix}I_n & -M\end{bmatrix}$. For a P-LCP instance $(M,q)$, the \emph{associated extension of a P-matroid} is realized by $\begin{bmatrix}I_n & -M & -q\end{bmatrix}$.

If one is given a $(2n+1)\times n$-matrix $V$ realizing the extension of a P-matroid, a P-LCP instance with the same associated P-matroid extension can be found using \begin{equation}\label{eqn:translation}(M,q):=(-V_S^{-1}V_T,-V_S^{-1}V_{2n+1}).
\end{equation}

\section{Cyclic-P-Matroids}

The \emph{alternating matroid} $\mathcal{A}^{n,r}$ is an oriented matroid of rank $r$ and on ground set $E_n=[n]$. It is realized by $n$ sequential points on the moment curve, i.e., by the matrix
$$V\in\mathbb{R}^{r\times n}:=\begin{pmatrix}
1 & 1 & \hdots & 1 \\
x_1 & x_2 & \hdots & x_n \\
x_1^2 & x_2^2 & \hdots & x_n^2 \\
\vdots & \vdots & & \vdots \\
x_1^{r-1} & x_2^{r-1} & \hdots & x_n^{r-1}
\end{pmatrix}$$
for $x_1 < x_2 < \cdots < x_n$.

The alternating matroids have been studied deeply and their structure is well-understood. For our purposes here, it is only important to know that they are uniform and that the signs of the non-zero elements in each circuit alternate along the natural order from $1$ to $n$~\cite{cordovil2000cyclicmatroid}.

A cyclic-P-matroid is a P-matroid that is reorientation equivalent to the alternating matroid $\mathcal{A}^{2n,n}$~\cite{klaus2012phd}. Formally, there exists a set $F\subseteq E_{2n}$ and a permutation $\pi$ of $E_{2n}$ such that $\mathcal{M}={}_{-F}(\pi^{-1}\cdot \mathcal{A}^{2n,n})$. This means that $\mathcal{M}$ can be obtained by relabeling the elements of $E_{2n}$ according to $\pi^{-1}$ in all circuits of $\mathcal{A}^{2n,n}$, and then flipping the sign of all elements in $F$ in all resulting circuits.

As $\mathcal{A}^{2n,n}$ (and therefore also $\mathcal{M}$) is uniform, any set $S\subset E_{2n}$ of size $n$ is a base of $\mathcal{M}$, and thus any set $S'\subset E_{2n}$ of size $n+1$ is the support of some fundamental circuit. The signs of the elements can be read off by ordering $S'$ according to $\pi$, giving them alternating signs, and finally flipping the sign of all elements that are in $F$. This process can be seen in \Cref{fig:readingoff}. Note that there are always two circuits for each $S'$, each being the negation of the other.

\begin{figure}[htbp]
\centering
\includegraphics[scale=0.6]{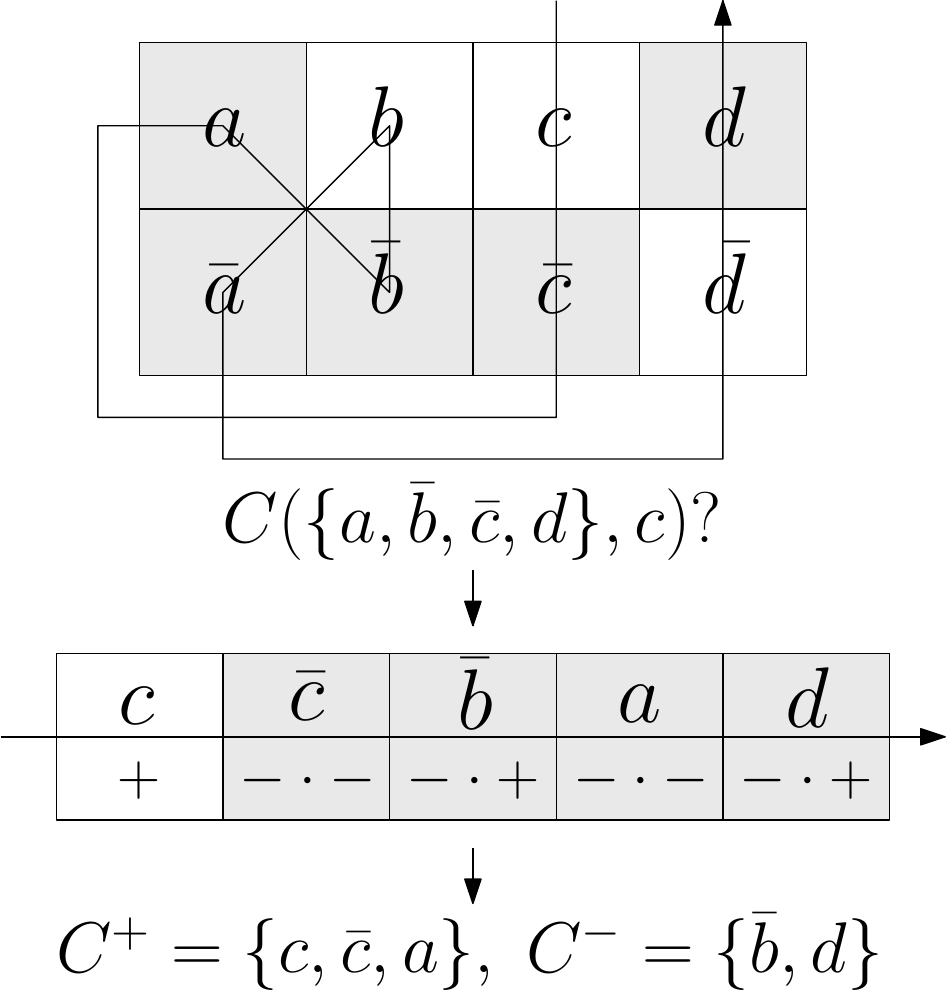}
\caption{Reading off the signs of a fundamental circuit in a cyclic-P-matroid. The arrow indicates the order of the elements in $\pi$. The shaded elements are those in $F$.}\label{fig:readingoff}
\end{figure}

Fukuda, Klaus, and Miyata \cite{fukuda2013subclass,klaus2012phd} characterized for which choices of $\pi$ and $F$ this leads to $\mathcal{M}$ being a P-matroid. Sadly, their characterization also allows for some choices for which $\mathcal{M}$ is not a P-matroid. We therefore provide a corrected version of \cite[Theorem 4.2] {fukuda2013subclass}:

\begin{theorem}\label{thm:correcteddef}
An oriented matroid $\mathcal{M}$ on $E_{2n}$, where $\pi\cdot({}_{-F}\mathcal{M})=\mathcal{A}^{2n,n}$ for some permutation $\pi$ of $E_{2n}$ and $F\subseteq E_{2n}$, is a P-matroid if and only if:
\begin{itemize}
\item $\pi$ is such that for all $e\in E_{2n}$ with $\pi(e)<\pi(\bar{e})$, and all $f\in E_{2n}$,
$$\pi(f)\in [\pi(e),\pi(\bar{e})]\Longleftrightarrow \pi(\bar{f})\in [\pi(e),\pi(\bar{e})].$$
\item and $F$ is such that for every $e\in E_{2n}$, exactly one of $e$ and $\bar{e}$ is in $F$ if
$$\frac{|\pi(e)-\pi(\bar{e})|-1}{2}$$
is even, and both or none in $F$ otherwise.
\end{itemize}
\end{theorem}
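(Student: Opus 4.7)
The plan is to analyze almost-complementary sign-reversing circuits of $\mathcal{M}$ directly. Since $\mathcal{A}^{2n,n}$ is uniform of rank $n$, its circuits---and hence those of $\mathcal{M}$---have support of size $n+1$, and every $(n+1)$-subset of $E_{2n}$ is the support of a circuit, unique up to sign. A support $S'$ is almost-complementary precisely when it contains exactly one complementary pair $(e,\bar{e})$ together with one element from each of the other $n-1$ complementary pairs. For any such $S'$, the sign of an element $g\in S'$ in the circuit of $\mathcal{M}$ is obtained by listing $S'$ in increasing $\pi$-order, assigning alternating signs, and then negating the sign of $g$ if $g\in F$. Consequently, $(e,\bar{e})$ has opposite signs in this circuit (i.e.\ yields an almost-complementary sign-reversing circuit) if and only if $k_{S'}(e)$ and $|F\cap\{e,\bar{e}\}|$ have the same parity, where $k_{S'}(e):=|\{g\in S':\pi(g)\in(\pi(e),\pi(\bar{e}))\}|$.

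Next I would show that the first condition controls whether $k_{S'}(e)$ depends on the choice of $S'$. Under the first condition, every other complementary pair $(f,\bar{f})$ lies either entirely inside or entirely outside the $\pi$-interval $(\pi(e),\pi(\bar{e}))$, so since $S'$ contains exactly one element from each such pair, $k_{S'}(e)$ equals the number of pairs lying strictly inside, namely $\frac{|\pi(e)-\pi(\bar{e})|-1}{2}$, independent of $S'$. Conversely, if the first condition fails for some $(e,\bar{e})$, there is a straddling pair $(f,\bar{f})$ with exactly one of $\pi(f),\pi(\bar{f})$ in the interval; by choosing $S'$ to contain $f$ versus $\bar{f}$ we produce two almost-complementary supports at $(e,\bar{e})$ whose $k$-values differ by $1$. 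Regardless of $F$, exactly one of the two resulting circuits is sign-reversing, so $\mathcal{M}$ fails to be a P-matroid, establishing the necessity of the first condition.

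Finally, assuming the first condition, the P-matroid requirement becomes: for every complementary pair $(e,\bar{e})$, the parities of $\frac{|\pi(e)-\pi(\bar{e})|-1}{2}$ and $|F\cap\{e,\bar{e}\}|$ must disagree. This translates directly into the second condition---exactly one of $e,\bar{e}$ is in $F$ when $\frac{|\pi(e)-\pi(\bar{e})|-1}{2}$ is even, and both or none otherwise---yielding both its necessity and its sufficiency. The main obstacle is the necessity of the first condition: one must verify that the two supports constructed by swapping $f$ and $\bar{f}$ genuinely yield almost-complementary circuits of $\mathcal{M}$ with the asserted sign-patterns on $(e,\bar{e})$, which reduces to the uniform structure of $\mathcal{A}^{2n,n}$ together with the explicit sign computation from the first paragraph.
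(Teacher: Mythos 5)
Your proposal is correct and follows essentially the same route as the paper: both analyze almost-complementary circuits via the alternating-sign/reorientation rule, show the first condition makes the count of support elements between $e$ and $\bar{e}$ equal to $\frac{|\pi(e)-\pi(\bar{e})|-1}{2}$ independently of the support (with the $f$ versus $\bar{f}$ swap producing a sign-reversing circuit when it fails), and reduce the second condition to a parity match with $|F\cap\{e,\bar{e}\}|$. Your unified parity criterion merely packages the paper's four sub-cases into one computation.
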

In Theorem 4.2 of \cite{fukuda2013subclass}, the condition on $\pi$ was such that $|\pi(e)-\pi(\bar{e})|$ was required to be odd, but the intervals $[\pi(e),\pi(\bar{e})]$ were not required to contain either both or none of $\pi(f)$ and $\pi(\bar{f})$.	

\begin{proof}
We will examine the almost-complementary circuits to prove the two directions individually. Recall that an oriented matroid is a P-matroid if and only if it contains no sign-reversing almost-complementary circuit.

\begin{enumerate}
\item[if] Assume $\pi$ and $F$ fulfill the conditions in \Cref{thm:correcteddef}. Let $\underline{C}$ be the support of some almost-complementary circuit $C$, and let $e$ be the element for which both $e,\bar{e}\in \underline{C}$. As $C$ is almost-complementary, the number of elements of $\underline{C}$ between $e$ and $\bar{e}$ according to $\pi$ must be $\frac{|\pi(e)-\pi(\bar{e})|-1}{2}$, as for each $f\in E_{2n}$, exactly one of $f$ and $\bar{f}$ is contained in $\underline{C}$. The signs of $e$ and $\bar{e}$ in $C$ must therefore be the same, as $F$ flips one of their signs exactly if the parity of their positions in $\underline{C}$ according to $\pi$ is different.\\
As this holds for all almost-complementary circuits, $\mathcal{C}$ contains no almost-complementary sign-reversing circuit, and $\mathcal{M}$ must be a P-matroid.
\item[only if] Assume $\pi$ and $F$ do not fulfill the conditions in \Cref{thm:correcteddef}. Then either $\pi$ does not fulfill the first condition, or $F$ does not fulfill the second.\\
In the first case, there must be elements $e,f$ such that the interval $[\pi(e),\pi(\bar{e})]$ contains exactly one of $\pi(f)$ and $\pi(\bar{f})$. Let $D$ be a maximal complementary set $D\subset E_{2n}-\{e,\bar{e},f,\bar{f}\}$ and compare the fundamental circuits $C_1:=C(D\cup\{\bar{e},f\},e)$ and $C_2:=C(D\cup\{\bar{e},\bar{f}\},e)$. The sign of $e$ must be the same in $C_1$ and $C_2$, while the sign of $\bar{e}$ must be different in both. We conclude that at least one of $C_1$ and $C_2$ must be an almost-complementary sign-reversing circuit.\\
In the second case, $F$ must violate the condition for some $e$. For any maximal complementary set $D\subset E_{2n}-\{e\}$, the fundamental circuit $C(D,\bar{e})$ must be an almost-complementary sign-reversing circuit, by the same arguments as in the proof of the ``if'' direction.\\
We conclude that $\mathcal{C}$ must contain an almost-complementary sign-reversing circuit, therefore $\mathcal{M}$ is not a P-matroid.
\end{enumerate}
\end{proof}

With the conditions laid out here, the permuted elements form a balanced parentheses expression, with each pair $e,\bar{e}$ acting as two matching parentheses. In this view, we can create a directed graph $G_\pi=([n],E)$, where $(i,j)\in E$ if the elements $j,\bar{j}$ are contained within the parenthesis formed by $i,\bar{i}$ (also $(i,i)\in E$ for all $i$).

\begin{lemma}\label{lemma:arborescence}
For $\pi$ fulfilling the conditions of \Cref{thm:correcteddef}, $G_\pi$ is the transitive closure of a \emph{branching}
\footnote{An \emph{arborescence} is a directed analogue of a rooted tree with all edges pointing away from the root. A \emph{branching}, or a forest of arborescences, is the union of finitely many disjoint arborescences.}.
Furthermore, the transitive closure of any branching is the graph $G_\pi$ for some valid $\pi$.
\end{lemma}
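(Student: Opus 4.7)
The key observation is that the first condition of \Cref{thm:correcteddef} is exactly the statement that, writing the elements of $E_{2n}$ in order of $\pi$ and replacing each $e$ with an open parenthesis (if $\pi(e)<\pi(\bar e)$) or a close parenthesis (otherwise), one obtains a balanced parenthesis expression in which $e$ and $\bar e$ are matched. Thus $G_\pi$ has edge $(i,j)$ precisely when the matched pair corresponding to $j$ lies inside the matched pair corresponding to $i$.

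For the forward direction, I would verify that this nesting is a partial order on $[n]$: reflexivity is witnessed by the self-loops, transitivity is immediate from nesting of intervals, and antisymmetry holds because $\pi$ is a permutation, so two distinct pairs cannot enclose each other. The Hasse diagram of this partial order is a branching, since each non-maximal pair has a unique innermost enclosing pair which serves as its parent; roots are the outermost pairs. By definition of the partial order, its transitive closure (together with self-loops) is exactly $G_\pi$.

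For the converse, given a branching $B$ on $[n]$, I would construct $\pi$ by a depth-first traversal: iterating through the roots of $B$ in some fixed order, emit an open parenthesis for the current node, then recursively emit the traversal of each of its children in $B$, then emit a close parenthesis. Identifying the $i$th open parenthesis with $i\in[n]$ and the matching close parenthesis with $\bar i\in[2n]\setminus[n]$ defines $\pi$ on $E_{2n}$. The resulting expression is balanced, and by construction a pair $j$ lies inside a pair $i$ in the expression iff $i$ is an ancestor of $j$ in $B$; hence every complementary pair $f,\bar f$ lies either entirely inside or entirely outside each interval $[\pi(e),\pi(\bar e)]$, so the first condition of \Cref{thm:correcteddef} is satisfied. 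The graph $G_\pi$ thus equals the transitive closure of $B$ (with self-loops), as required. The only subtle step is the correspondence between permutations satisfying the nesting condition and balanced parenthesis expressions, which amounts to unpacking the condition on $\pi$; the rest is the classical bijection between such expressions and forests of rooted trees.
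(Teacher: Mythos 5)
Your proposal is correct and takes essentially the same route as the paper: both directions rest on the observation that the condition on $\pi$ makes the complementary pairs a laminar (balanced-parenthesis) family of intervals, so that the enclosing intervals of any pair are totally ordered by containment (giving a unique parent, hence a branching), and the converse is the same recursive depth-first placement of each arborescence. The level of rigor is comparable to the paper's own sketch, so no gaps to flag.
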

\begin{figure}[htbp]
\centering
\includegraphics[]{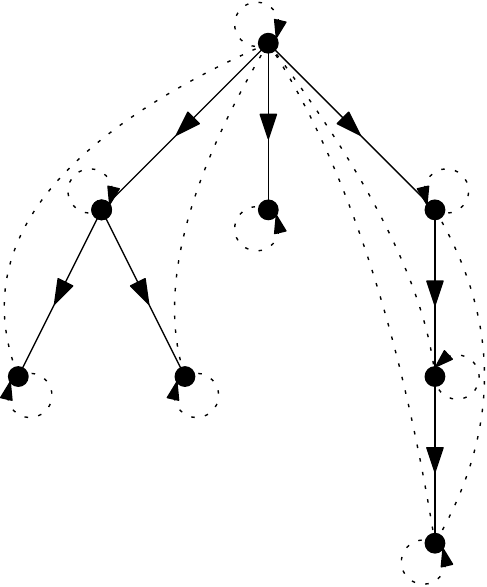}
\caption{An arborescence with the \emph{transitive edges} added by taking the transitive closure indicated with dotted lines. The non-transitive edges are called \emph{direct edges}.}\label{fig:arborescence}
\end{figure}
\begin{proof}
We prove both directions independently.

First, we show that for any valid $\pi$, $G_\pi$ is the transitive closure of a branching. Clearly, $G_\pi$ must be transitive, as it is built from the transitive relation of relative containment of intervals. We can assign each interval (and therefore each vertex in $G_\pi$) a depth indicating the number of intervals it is contained in. Whenever an interval is contained in two other intervals, one of these two intervals must contain the other. Therefore an interval of depth $k$ is contained in exactly one interval of depth $l$ for any $l<k$ and thus each vertex in $G_\pi$ has exactly one incoming edge from any lower depth. We conclude that $G_\pi$ must be the transitive closure of a branching.

Now we show that any transitive closure of a branching can be realized by some valid $\pi$. Each arborescence is an independent part of the permutation, ordered arbitrarily. The interval corresponding to the root of the arborescence encompasses the intervals for all its descendants. The descendants of the root again form a transitive closure of a branching, and can be converted into a permutation recursively.
\end{proof}

\section{Simple Extensions Induce \matousek{} USOs}

A \emph{simple extension} of a cyclic-P-matroid is an extension that can be realized by $q$ being an additional point $(1,x_q,...,x_q^{n-1})$ on the moment curve. 
\begin{definition}[\protect{\cite[Proposition 8.18]{klaus2012phd}}]
A simple extension $\hat{\mathcal{M}}$ of a cyclic-P-matroid is an oriented matroid $\hat{\mathcal{M}}=(\hat{E_{2n}},\hat{\mathcal{C}})$ with $\pi\cdot({}_{-F}\hat{\mathcal{M}})=\mathcal{A}^{2n+1,n}$ for some permutation $\pi$ of $\hat{E_{2n}}=E_{2n}\cup \{q\}$ with $\pi(q)=q$ and $F\subseteq\hat{E_{2n}}$, such that $\pi$ and $F$ restricted to $E_{2n}$ fulfill the conditions of \Cref{thm:correcteddef}.
\end{definition}

We will first look at the USOs that can be realized by simple extensions of cyclic-P-matroids with $q$ being the last element in the permutation, i.e., $x_q > x_{2n}$.

\subsection{$q$ at the End of $\pi$}

If $q$ is at the end of the permutation, there is always the same number ($n$) of elements before $q$ in any circuit $C$. When reading off the fundamental circuit $C(B,q)$, we will therefore always assign the first element the same sign (before possibly flipping it due to it being in $F$).

\begin{lemma}\label{lemma:qatend}
Let $\hat{M}$ be a simple extension of a cyclic-P-matroid with $\pi\cdot({}_{-F}\hat{\mathcal{M}})=\mathcal{A}^{2n+1,n}$, with $q$ last in $\pi$. The USO $u$ associated with $\hat{M}$ is a \matousek{}-type USO with dimension influence graph $G_{\pi'}$, where $\pi'$ is $\pi$ constrained to $E_{2n}$.
\end{lemma}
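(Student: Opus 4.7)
The plan is to verify the \matousek{} characterization directly: for every vertex $v\in P([n])$ and every pair of dimensions $d,d'\in [n]$, show that $(d'\in u(v))\xor(d'\in u(v\xor\{d\}))$ holds if and only if $(d,d')\in G_{\pi'}$. Since $G_{\pi'}$ already carries a loop at every vertex and, by \Cref{lemma:arborescence}, is otherwise the transitive closure of a branching, it contains no non-loop cycles; verifying the equivalence therefore establishes that $u$ is a \matousek{}-type USO with $G_{\pi'}$ as its dimension influence graph.

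First I would unpack what toggling $d$ does at the level of circuits: it swaps $d$ and $d+n$ in the basis $B_v$, and hence in the support $B_v\cup\{q\}$ of the fundamental circuit. Since $q$ sits at the end of $\pi$, it occupies position $n+1$ in the sorted support for every $v$, so the potential global sign flip needed to enforce $q\in C_v^+$ is identical for $C_v$ and $C_{v\xor\{d\}}$ and cancels out in any comparison. It therefore suffices to track, for each support element of interest, its position parity in the sorted support together with its $F$-status.

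For $d'\neq d$, the basis element $e'\in\{d',d'+n\}$ is unchanged by the toggle, so its $F$-status is unchanged as well. A direct count of how many elements of $B_v\cup\{q\}$ lie below $e'$ in $\pi$ shows that the swap $d\leftrightarrow d+n$ changes the sign of $e'$ precisely when exactly one of $\pi(d),\pi(d+n)$ is below $\pi(e')$, i.e., when $\pi(e')$ lies strictly between $\pi(d)$ and $\pi(d+n)$. The first condition of \Cref{thm:correcteddef} guarantees that this happens for $\pi(d')$ iff it happens for $\pi(d'+n)$, so the criterion depends only on the pair $\{d',d'+n\}$, and it is exactly the definition of $(d,d')\in G_{\pi'}$.

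The main obstacle, and the only place where the second condition of \Cref{thm:correcteddef} enters, is the self case $d=d'$, where both the support element (shifting from $d$ to $d+n$) and its $F$-contribution may change. Letting $k$ denote the number of complementary pairs whose $\pi$-values lie strictly between $\pi(d)$ and $\pi(d+n)$, I would show that the position of $d$ in $B_v$ and the position of $d+n$ in $B_{v\xor\{d\}}$ differ by $k$, while the second condition of \Cref{thm:correcteddef} forces exactly one of $d,d+n$ to lie in $F$ iff $k$ is even. Checking the two parities of $k$ separately, the parity shift and the $F$ contribution combine in either case to force the signs of $d$ in $C_v$ and of $d+n$ in $C_{v\xor\{d\}}$ to disagree, so the $d$-th coordinate always flips—matching the loop $(d,d)\in G_{\pi'}$ and finishing the argument.
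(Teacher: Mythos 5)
Your proposal is correct and follows essentially the same route as the paper: swap $d\leftrightarrow d+n$ in the support of the fundamental circuit, track how the $\pi$-position parity of each remaining element shifts, and conclude that the sign flips exactly for the elements nested between $\pi(d)$ and $\pi(d+n)$, with the first condition of \Cref{thm:correcteddef} making this well-defined on complementary pairs. The only difference is that you explicitly verify the self case $d=d'$ via the parity of $k$ and the $F$-condition, whereas the paper leaves that coordinate flip implicit (it is automatic once $u$ is known to be an orientation); your extra check is harmless and makes the argument slightly more self-contained.
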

\begin{proof}
We will prove that for any vertex, going along the edge in some dimension $i$, the outmap changes exactly in the dimensions $j$ for which $(i,j)\in E(G_{\pi'})$. As $G_{\pi'}$ is acyclic (apart from the loops), this proves that the USO is a \matousek{}-type USO with dimension influence graph $G_{\pi'}$.

Let $B\in P([n])$ be some vertex of the USO, and $C(B,q)$ its corresponding fundamental circuit. Let $i\in [n]$ be some dimension. Note that $\underline{C(B,q)}$ contains either $i$ or $i+n$. Assume w.l.o.g. that $i\in \underline{C(B,q)}$, and $\pi'(i)<\pi'(i+n)$.

We can now split the set $(B\cup q)\backslash i$ in three parts along $\pi'$: The part before $i$, the part between $i$ and $i+n$, and the part after $i+n$. We will analyze what happens to the sign of the elements in each part when $i$ is removed from the circuit and replaced by $i+n$.

The sign of an element $j$ of the first part will not change, as neither $i$ nor $i+n$ is located before $j$. This means that the number of elements of the circuit before $j$ and thus its sign in the circuit stays the same. The same holds for all elements of the third part, as both $i$ and $i+n$ are located before them.

For an element $j$ of the middle part, the number of elements in the circuit coming before $j$ decreases by one when $i$ is exchanged with $i+n$, and therefore its sign in the circuit flips.

This shows that $u(B\xor i)$ differs from $u(B)$ in exactly the dimensions coming between $i$ and $i+n$ in $\pi'$ (and $i$ itself), which is exactly the set of out-neighbours of $i$ in $G_{\pi'}$.

We conclude that $u$ is a \matousek{}-type USO with dimension influence graph $G_{\pi'}$.
\end{proof}

Putting \Cref{lemma:arborescence,lemma:qatend} together, we know that any \matousek{}-type USO with dimension influence graph being the transitive closure of a branching is realizable. We will now show that these are in fact all realizable \matousek{}-type USOs.

\begin{lemma}\label{lemma:forbidden}
If the dimension influence graph of a \matousek{} USO contains any of the two graphs $G_1$ or $G_2$ in \Cref{fig:forbidden} as an induced subgraph, the USO is not realizable.
\end{lemma}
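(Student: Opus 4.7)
My plan is to reduce the claim to a finite 3-dimensional verification and then rule out realizability of two specific small Matou\v{s}ek USOs. First I would argue that realizability is inherited by faces: the face of a P-cube spanned by a subset $D$ of the dimensions is again a P-cube, because restricting the P-matrix $M$ and the right-hand side $q$ to the indices in $D$ (after an appropriate reorientation fixing the desired corner) yields a sub-P-LCP. On the combinatorial side, the face of a Matou\v{s}ek USO spanned by $D$ is, up to the isomorphisms allowed for Matou\v{s}ek USOs, a Matou\v{s}ek USO whose dimension influence graph is the induced subgraph on $D$. Consequently, if a Matou\v{s}ek USO contains $G_1$ or $G_2$ as an induced subgraph on three dimensions and were realizable, then the 3-dimensional Matou\v{s}ek USO whose influence graph is exactly $G_1$ (respectively $G_2$) would itself have to be realizable. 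It therefore suffices to rule out realizability of these two concrete 3-dimensional USOs.

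For each of $G_1$ and $G_2$, I would then write down the associated 3-dimensional Matou\v{s}ek USO explicitly, by iteratively applying the defining XOR rule $m(v \xor \{d\}) = m(v) \xor N^+(d)$ starting from $m(\emptyset) = \emptyset$, where $N^+(d)$ is the closed out-neighborhood of $d$ in the influence graph. This produces a complete outmap table at all eight vertices, after which the structure of each of the two USOs (location of sink, source, possible cycles, behaviour on 2-faces) can be inspected directly.

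To finish, I would establish non-realizability of each of these two 3-USOs. Two routes suggest themselves: invoke the Holt--Klee condition, which requires that the sink and source of every $k$-face of a realizable USO be joined by $k$ internally-vertex-disjoint monotone paths, or, as a fallback, assume a realization $(M,q)$ exists and derive a contradiction from the resulting sign conditions on the relevant $2\times 2$ and $3\times 3$ subdeterminants of $[\,I_3 \mid -M \mid -q\,]$. I expect this final step to be the main obstacle: it is mechanical but easy to get wrong, and must be carried out separately for $G_1$ and $G_2$. If it turns out that the Holt--Klee condition already excludes both graphs, the proof becomes clean and short; otherwise one falls back on the direct sign analysis of the realizing data.
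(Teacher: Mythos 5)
Your proposal follows essentially the same route as the paper: restrict to the 3-dimensional faces spanned by the offending dimensions, observe they are (up to isomorphism) the Matou\v{s}ek USOs with influence graphs $G_1$ and $G_2$, and rule these out via the Holt--Klee condition, using the fact that faces of realizable USOs are realizable. Your primary route (Holt--Klee) is exactly what the paper uses, and it does succeed --- both 3-dimensional USOs lack three vertex-disjoint source-to-sink paths --- so the fallback sign analysis is unnecessary.
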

\begin{proof}
Any $3$-dimensional face spanned by the dimensions whose vertices induce one of these subgraphs is isomorphic to either $u_1$ or $u_2$ (see \Cref{fig:forbidden}). Both $u_1$ and $u_2$ do not contain three vertex-disjoint paths from their source to their sink, and are therefore not realizable due to failing the Holt-Klee condition~\cite{gaertner2008grids,holt1999klee}. As all faces of a realizable USO are realizable, the whole \matousek{} USO cannot be realizable either.
\end{proof}

\begin{figure}[htbp]
\centering
\includegraphics[]{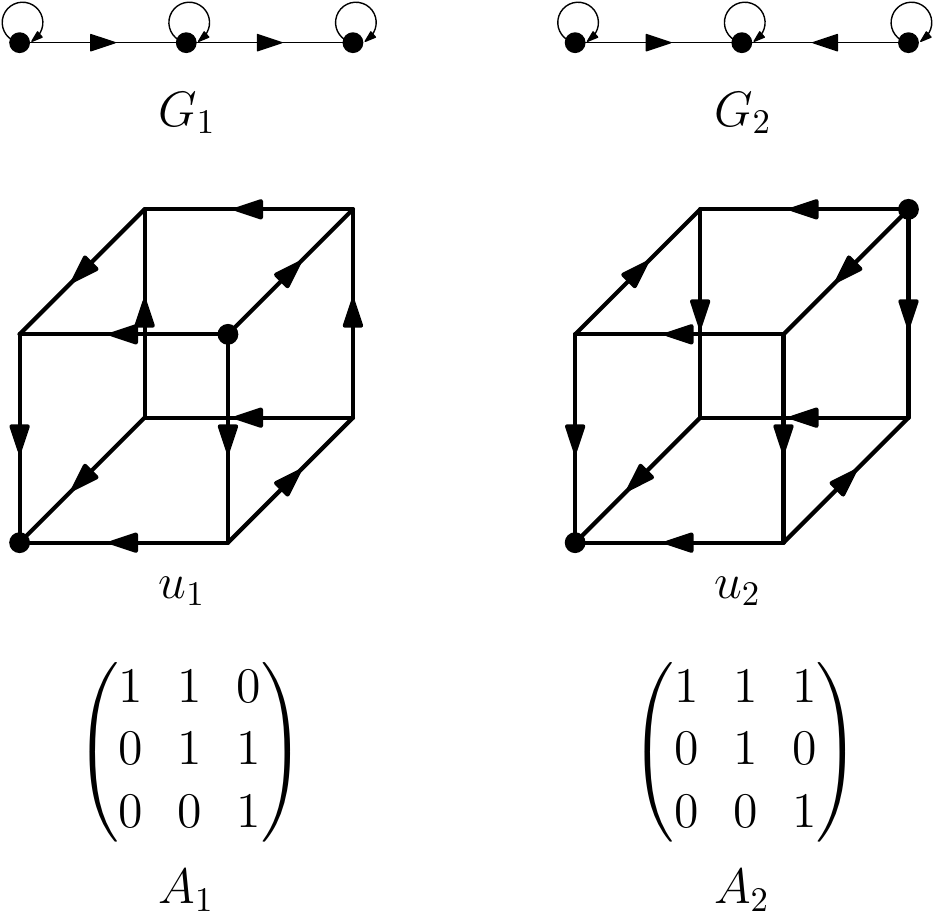}
\caption{The two forbidden subgraphs $G_1,G_2$, as well as their associated USOs and \matousek{} matrices as used in \cite{gaertner2002simplex}.}\label{fig:forbidden}
\end{figure}

\begin{lemma}
Every \matousek{} USO with a dimension influence graph which is not the transitive closure of a branching is not realizable.
\end{lemma}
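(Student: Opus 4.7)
The plan is to prove the contrapositive: if the dimension influence graph $G$ of a \matousek{} USO $m$ is not the transitive closure of a branching, then $G$ must contain one of the forbidden subgraphs $G_1, G_2$ of \Cref{fig:forbidden} as an induced subgraph, at which point \Cref{lemma:forbidden} immediately yields that $m$ is not realizable.

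The first step is to extract from the proof of \Cref{lemma:arborescence} a clean structural characterization: a graph on $[n]$ with all loops and no further cycles is the transitive closure of a branching if and only if (i) it is transitive, and (ii) for every vertex $v$, the set of proper in-neighbors of $v$ forms a chain under the edge relation (any two of them are comparable). The ``only if'' direction is immediate since in a branching, the ancestors of any vertex lie on a single root-to-parent path and are therefore linearly ordered. The ``if'' direction is equally short: picking, for each non-root vertex $v$, the top of its in-neighbor chain as its parent recovers a branching whose transitive closure is $G$.

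The second step is a case analysis. Since $G$ is the dimension influence graph of a \matousek{} USO, it is acyclic apart from the loops, so the only way $G$ can fail to be the transitive closure of a branching is that (i) or (ii) above fails. If transitivity fails, there exist distinct $a, b, c$ with $(a,b), (b,c) \in E$ but $(a,c) \notin E$; acyclicity forbids $(b,a), (c,b), (c,a)$, so $\{a,b,c\}$ induces a directed path of length two together with the loops. If the chain property fails, there are distinct $u, v, w$ with $(u,w), (v,w) \in E$ and neither $(u,v)$ nor $(v,u)$ in $E$; acyclicity again forbids $(w,u), (w,v)$, so $\{u,v,w\}$ induces a ``V''-shape with common head $w$ together with the loops. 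These two three-vertex patterns are exactly the two forbidden graphs $G_1$ and $G_2$ of \Cref{fig:forbidden}, so \Cref{lemma:forbidden} finishes the proof.

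The only part that requires any real attention is the structural characterization in the first step (which is essentially the converse of what \Cref{lemma:arborescence} already proved in one direction) together with matching the two configurations arising in the case analysis to $G_1$ and $G_2$ as drawn; the remainder of the argument is a routine use of acyclicity to rule out the reverse directions of edges.
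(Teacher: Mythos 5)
Your proposal is correct and follows essentially the same route as the paper: split into the case where transitivity fails (yielding $G_1$) and the case where some vertex has two incomparable in-neighbors (yielding $G_2$), then invoke \Cref{lemma:forbidden}. The only difference is that you make explicit, with a short proof, the chain characterization of transitive closures of branchings that the paper uses implicitly, and you explicitly verify via acyclicity that the subgraphs are induced --- both reasonable additions, but not a different argument.
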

\begin{proof}
Note that the dimension influence graph $G$ is always acyclic. Assume $G$ is not the transitive closure of a branching.

Assume $G$ is not transitive. In this case $G$ contains three vertices $x$, $y$, and $z$, with $(x,y),(y,z)\in E$, but $(x,z)\not\in E$. This means the graph contains the forbidden subgraph $G_1$.

We thus assume $G$ is transitive. As $G$ is not the transitive closure of a branching, there must be a vertex $x$ that has an incoming edge from at least two vertices $p_1$ and $p_2$, where neither $p_1$ has an edge to $p_2$ nor vice-versa. In this case, $x$, $p_1$, and $p_2$ together form the forbidden subgraph $G_2$.

In either case, by \Cref{lemma:forbidden}, the \matousek{} USO with dimension influence graph $G$ is not realizable.
\end{proof}

\begin{theorem}
The realizable \matousek{} USOs are exactly those whose dimension influence graph does not contain either of the forbidden graphs $G_1$ and $G_2$ as induced subgraphs. Furthermore, they can be realized by a simple extension of a cyclic-P-matroid with $q$ as the last element of $\pi$.
\end{theorem}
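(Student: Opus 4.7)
The theorem is essentially a synthesis of the lemmas just established, so the plan is to assemble them rather than develop new machinery.

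For the "only if" direction of the characterization, I would simply invoke the preceding unnamed lemma (and its underlying Lemma 4.3): if the dimension influence graph $G$ contains $G_1$ or $G_2$ as an induced subgraph, the corresponding 3-dimensional face fails the Holt-Klee condition, so the whole \matousek{} USO is not realizable. This handles necessity.

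For the "if" direction, I would argue in the contrapositive form already used in Lemma 4.4: if $G$ contains neither $G_1$ nor $G_2$ as an induced subgraph, then $G$ must be the transitive closure of a branching. Indeed, forbidding $G_1$ forces transitivity on the non-loop edges, and forbidding $G_2$ forces each vertex to have at most one "minimal" predecessor among its in-neighbours, which together with transitivity characterizes the transitive closure of a branching. By \Cref{lemma:arborescence}, such a graph equals $G_{\pi'}$ for some permutation $\pi'$ of $E_{2n}$ satisfying the first condition of \Cref{thm:correcteddef}. Extend $\pi'$ to a permutation $\pi$ of $\hat{E_{2n}}$ by appending $q$ at the end, and choose any $F\subseteq E_{2n}$ satisfying the second condition of \Cref{thm:correcteddef}; this defines a simple extension of a cyclic-P-matroid. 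By \Cref{lemma:qatend}, the USO associated with this extension is a \matousek{}-type USO whose dimension influence graph is exactly $G$.

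The one subtlety to address is that \Cref{lemma:qatend} only produces \emph{some} \matousek{}-type USO with the right dimension influence graph, whereas the theorem claims realizability of \emph{the} \matousek{} USO determined by $G$. However, as noted in the preliminaries, a \matousek{} USO is uniquely determined by its dimension influence graph up to isomorphisms (mirroring along a set of dimensions), and such isomorphisms preserve realizability. If one wishes to obtain the exact USO rather than an isomorphic copy, the remaining freedom in choosing $F$ (i.e., putting both or neither of $e,\bar e$ into $F$ when allowed, corresponding to mirroring dimension $e$) can be used to flip each dimension independently, so every mirror image of the canonical \matousek{} USO with graph $G$ is realized as well. Putting both directions together yields the theorem and simultaneously establishes that every such realizable USO arises from a simple extension with $q$ last in $\pi$. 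I do not expect any real obstacle here; the only thing to be slightly careful about is making the "isomorphism up to mirroring" argument explicit so that the reader sees that the construction hits the target USO and not merely something in its isomorphism class.
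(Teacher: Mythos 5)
Your proposal is correct and assembles the preceding lemmas in exactly the way the paper intends (the paper states this theorem without a separate proof, treating it as an immediate consequence of \Cref{lemma:arborescence}, \Cref{lemma:qatend}, \Cref{lemma:forbidden}, and the unnamed lemma). Your explicit handling of the isomorphism-versus-identity subtlety via the freedom in choosing $F$ is a welcome addition that the paper glosses over.
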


As a corollary, we get a simple way to realize each realizable \matousek{} USO by first determining the simple extension of a cyclic-P-matroid realizing it, and then applying \Cref{eqn:translation} to the matrix of $2n+1$ (permuted and negated) points on the moment curve realizing that matroid.

\subsection{$q$ in the Middle of $\pi$}

Having understood the USOs stemming from simple extensions with $q$ at the end of $\pi$, we would like to understand the influence of moving $q$ towards the front of $\pi$. As we will see, this operation does not change the set of possible USOs.

\begin{observation}\label{observation:pushingisflipping}
Let $\hat{\mathcal{M}}$ be some simple extension of a cyclic-P-matroid with corresponding USO $u$. If $q$ is pushed towards the start of $\pi$ past some element $i\in [n]$, the direction of all edges between vertices in the lower $i$-facet is flipped.
\end{observation}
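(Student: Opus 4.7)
The plan is to reduce to the atomic case in which $q$ is $\pi$-adjacent to $i$ and the two swap positions; a push past a longer block of elements decomposes into single adjacent swaps, so it is enough to handle one. For each vertex $B\in P([n])$ I will compare the fundamental circuit $C_B$, whose support is
$$S_B=\{j:j\in[n]\setminus B\}\cup\{j+n:j\in B\}\cup\{q\},$$
before and after the swap, and read off the effect on $u(B)$.

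The first step is a case distinction by whether $i$ lies in $S_B$. If $i\in B$, then $S_B$ uses $i+n$ rather than $i$, so $i\notin S_B$; since $q$ and $i$ were $\pi$-adjacent and $i\notin S_B$, the swap does not alter the relative $\pi$-order of elements of $S_B$, hence neither $C_B$ nor $u(B)$ changes. If $i\notin B$, then $i\in S_B$, so $q$ and $i$ are adjacent within the $\pi$-ordering of $S_B$ as well, and the swap exchanges their positions in $S_B$.

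The core step is the sign analysis in the second case. Swapping two adjacent positions in the alternating $(+,-,+,-,\dots)$ assignment flips the raw signs of exactly those two elements and leaves the raw signs of every other element of $S_B$ unchanged; the $F$-data is untouched throughout. To renormalise so that $q$ again carries sign $+$ (the convention for a fundamental circuit with respect to $q$), a global negation of the circuit is required; this negation undoes the flip on $i$ but flips every remaining element of $S_B$. Consequently, on the lower $i$-facet ($i\notin B$): for each $j\neq i$ the unique element of $\{j,j+n\}\cap S_B$ reverses sign, so $j\in u'(B)\Leftrightarrow j\notin u(B)$; and for $j=i$ the element $i\in S_B$ keeps its sign, so $i\in u'(B)\Leftrightarrow i\in u(B)$.

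Combining the two cases, $u$ is unchanged on the upper $i$-facet, while on the lower $i$-facet $u'(B)=u(B)\,\triangle\,([n]\setminus\{i\})$. Translating into edges: upper-facet edges are untouched and the $i$-coordinate of each lower-facet outmap is preserved, so both the $i$-edges and every upper-facet edge retain their orientation; but for any $j\neq i$ and any lower-facet edge in direction $j$, both endpoints flip in coordinate $j$, and the edge reverses. The only delicate point is the bookkeeping of the global sign renormalisation for $q$ so that $i$ ends up flipped twice (hence preserved) while every other element of the support is flipped exactly once; once that is in place the rest of the argument is routine.
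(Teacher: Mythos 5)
Your proof is correct, and since the paper states this as an observation without any proof, you are supplying an argument the paper omits; the technique you use (comparing the fundamental circuit supports, noting that an adjacent transposition of $q$ and $i$ flips exactly those two raw alternating signs, and then renormalising so that $q$ stays positive, which preserves $i$'s sign and flips every other element) is the same circuit-sign bookkeeping the paper uses to prove \Cref{lemma:qatend}. The case split on whether $i$ lies in the support correctly isolates the lower $i$-facet, and the resulting outmap change $u'(B)=u(B)\,\triangle\,([n]\setminus\{i\})$ there, with the upper facet and the $i$-edges untouched, is exactly the claimed flipping of all edges inside the lower $i$-facet (and is consistent with \Cref{observation:flippingchangesgraph}).
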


Similarly, if $q$ is pushed past an element $i+n\in [n]$, the direction of all edges between vertices in the upper $i$-facet is flipped.

\begin{observation}\label{observation:flippingchangesgraph}
If all edges of some $i$-facet of a \matousek{}-type USO $u$ are flipped, this changes whether $(i,j)$ is an edge in the dimension influence graph of $u$ for all $j\in [n]\backslash i$.
\end{observation}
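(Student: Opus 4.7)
The plan is to track the flip at the level of outmaps and then directly evaluate the XOR that defines the dimension influence graph. First I would unpack what ``flipping all edges'' of an $i$-facet does: every edge $\{w, w\xor\{j\}\}$ with $j\neq i$ whose two endpoints both lie in the flipped facet is reversed, so the membership of $j$ in the outmap of each such vertex is toggled, for every $j\neq i$. In contrast, the membership of $i$ in the outmap of any vertex is unaffected, because the $i$-edges straddle the two $i$-facets and so are not edges of either facet. Vertices in the opposite facet keep their outmaps entirely.

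Next I would fix any $j\in[n]\backslash i$ and any $i$-edge $\{v, v\xor\{i\}\}$, and compute $\big(j\in u'(v)\big)\xor\big(j\in u'(v\xor\{i\})\big)$, where $u'$ denotes the orientation after the flip. Without loss of generality suppose the lower $i$-facet was flipped: if $v$ lies in the lower facet, the first term is negated while the second is unchanged; if $v$ lies in the upper facet the situation is reversed. In either case, exactly one of the two indicator values is negated, so the new XOR is the negation of the old one. Since this change is uniform over every $i$-edge, the flipped orientation is still a \matousek{}-type USO (the XOR remains constant in $v$ for each fixed pair $(i,j)$), and $(i,j)$ belongs to its dimension influence graph if and only if it did not belong to that of $u$.

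The only real subtlety will be checking that the outmap change is uniform across both $i$-facets so that the result is indeed again \matousek{}-type rather than some orientation where the dimension-influence relation fails to be constant; once the two cases above are checked, the conclusion is immediate and the remainder amounts to bookkeeping. I therefore do not expect any substantial obstacle.
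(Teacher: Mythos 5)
Your argument is correct and is the direct verification that the paper omits (the statement is left as an unproved Observation): flipping the facet toggles $j\in u(w)$ for every $w$ in that facet and every $j\neq i$, leaves $i$-memberships and the opposite facet untouched, and hence negates the XOR $\bigl(j\in u(v)\bigr)\xor\bigl(j\in u(v\xor\{i\})\bigr)$ uniformly over all $i$-edges while leaving the XORs for every pair $(d,d')$ with $d\neq i$ unchanged (both endpoints of a $d$-edge lie in the same $i$-facet). The only caveat is your parenthetical claim that the result is ``still a Matou\v{s}ek-type USO'': the dimension-influence relation indeed remains constant in $v$ for each pair, but if the toggling makes the resulting graph cyclic the orientation is no longer a USO at all, as the paper itself notes in its closing remark; this does not affect the content of the Observation, which only concerns how the edge set of the dimension influence graph changes.
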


When moving $q$ towards the front of $\pi$, facets are flipped from the back of $\pi'$ towards the front. The flipped facets therefore form a suffix of $\pi'$. Note that if both the upper and lower facet of some dimension are flipped, this operation can be ignored. The sets of dimensions for which exactly one facet can be flipped at the same time therefore form a path in $G_{\pi'}$ starting at some root, following only direct edges and not using transitive edges (see \Cref{fig:arborescence}).

\begin{lemma}\label{lemma:flipping}
Let $G$ be the transitive closure of some branching. Let $S$ be the set of vertices on some directed path from some root of $G$, such that there is no $v\not\in S$ with vertices $u,w\in S$ and $(u,v),(v,w)\in E$.

Flipping whether $(s,t)\stackrel{?}{\in} E$ for all $s\in S$ and all $t\not=s$ results in some graph $G_S$ which is also the transitive closure of a branching.
\end{lemma}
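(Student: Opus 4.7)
The plan is to construct an explicit branching $B'$ whose transitive closure equals $G_S$, obtained from the underlying branching $B$ of $G$ by a local surgery along $S$.

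First, I would use the convexity hypothesis to pin down $S$. Since $S$ is the vertex set of a directed $G$-path starting at a root $s_0$, its elements form an ancestor chain in the arborescence $A$ of $B$ rooted at $s_0$. For any two elements $s_i, s_j$ of $S$ with $s_i$ a $B$-ancestor of $s_j$, and any vertex $v$ on the $B$-direct-edge-path from $s_i$ to $s_j$, both $(s_i, v)$ and $(v, s_j)$ lie in $E$, so the hypothesis forces $v \in S$. Applying this to the extremal pair shows that $S$ is exactly the direct-edge path $s_0 \to s_1 \to \cdots \to s_k$ in $A$ starting at the root.

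Next, I would define $B'$ by four local modifications to $B$: (i) reverse the spine, replacing each edge $(s_i, s_{i+1})$ of $B$ by $(s_{i+1}, s_i)$; (ii) for every non-spine child $c$ of $s_i$ with $i < k$, re-parent $c$ from $s_i$ to $s_{i+1}$; (iii) for every non-spine child $c$ of $s_k$, simply detach $c$ from $s_k$, so that its $B$-subtree becomes its own arborescence in the new forest; and (iv) for every other arborescence $A' \ne A$ of $B$ with root $r$, attach $r$ as a new child of $s_0$. One directly checks that every vertex has at most one incoming edge and that no cycle is created, so $B'$ is a branching.

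The core step is verifying that the transitive closure of $B'$ equals $G_S$. I would fix a pair $(u,v)$ with $u \ne v$ and split on whether $u \in S$. When $u \notin S$, the $B$-subtree rooted at $u$ is untouched by the surgery and coincides with the $B'$-subtree rooted at $u$, so $(u,v) \in G_S \iff (u,v) \in G \iff v$ is a $B'$-descendant of $u$. When $u = s_i \in S$, flipping gives $(s_i, v) \in G_S$ iff $v$ lies outside the $B$-subtree of $s_i$, and one checks from the four surgery rules that the $B'$-descendants of $s_i$ are exactly these vertices: the reversed spine $s_0, \ldots, s_{i-1}$, the re-parented non-spine subtrees hanging off $s_1, \ldots, s_i$, and, via $s_0$, all of the absorbed foreign arborescences. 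The main obstacle is simply keeping this bookkeeping consistent at the two ends of the spine, where the asymmetric treatment of non-spine subtrees of $s_k$ (detached entirely) and of foreign arborescences (all absorbed under $s_0$) must match exactly what the flipping forces.
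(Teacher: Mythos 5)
Your proof is correct and is precisely a formalization of the operation the paper only sketches (the lemma is explicitly left ``without formal proof''): pin $S$ down to a direct-edge root path via the convexity hypothesis, reverse the spine, re-hang each non-spine subtree of $s_j$ under $s_{j+1}$, detach the non-spine subtrees of $s_k$ as new arborescences, and absorb the remaining arborescences under $s_0$. I checked that the $B'$-descendant set of each $s_i$ is exactly $\{s_0,\dots,s_{i-1}\}$ together with the former non-spine subtrees of $s_0,\dots,s_{i-1}$ and all foreign arborescences, which is exactly the complement of the strict $B$-subtree of $s_i$ as the flip requires, and that vertices outside $S$ keep their descendant sets --- so the bookkeeping at both ends of the spine works out and the proposal stands as a complete proof.
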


We leave this lemma without formal proof, but intuitively, this operation removes the vertices of $S$ from their descendants, reverses their order, and adds them as a parent to all other vertices of the same depth, as can be seen in \Cref{fig:pathreversal}.

\begin{figure}[htbp]
\centering
\includegraphics[]{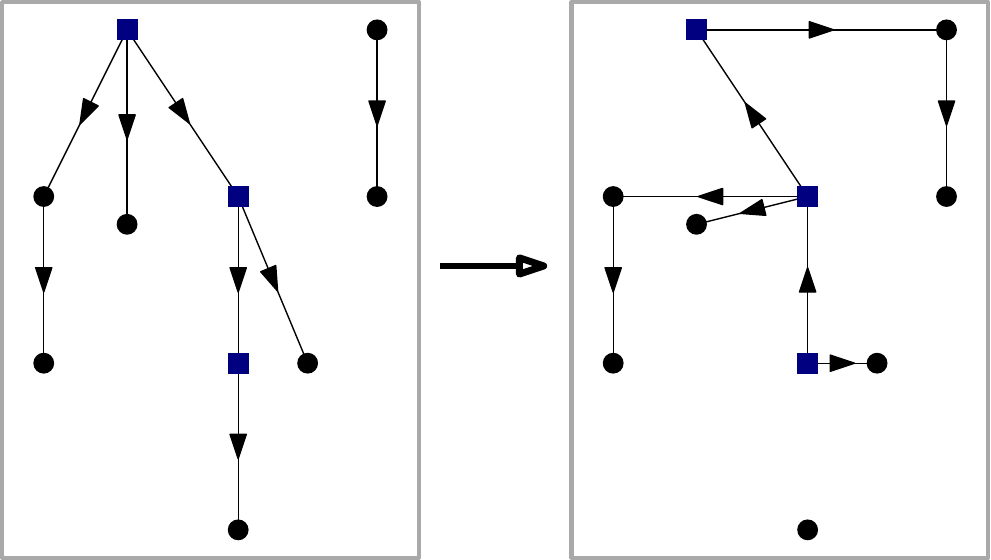}
\caption{Flipping whether $(s,t)\in E$ for all $s$ in some path (blue square vertices) starting at a root and all $t\not=s$. The transitive edges are left out for clarity.}
\label{fig:pathreversal}
\end{figure}

Putting together \Cref{observation:pushingisflipping}, \Cref{observation:flippingchangesgraph}, and \Cref{lemma:flipping}, we get our following main result.

\begin{theorem}
The set of USOs that can stem from simple extensions of cyclic-P-matroids is the set of realizable \matousek{}-type USOs.
\end{theorem}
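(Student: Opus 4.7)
The plan is to prove the two inclusions of the set equality separately.

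For the direction ``simple extensions give realizable \matousek{}-type USOs'', realizability follows immediately from \Cref{eqn:translation}, which produces an explicit P-LCP instance inducing the same USO. To see that the USO is \matousek{}-type, I would first handle the case where $q$ sits at the last position of $\pi$: here \Cref{lemma:qatend} already yields a \matousek{} USO with dimension influence graph $G_{\pi'}$. For a general position of $q$, I would start from the $q$-at-end variant and slide $q$ leftward one element at a time. By \Cref{observation:pushingisflipping}, each such step flips all edges inside either the lower or the upper $i$-facet for some $i\in[n]$, and two flips affecting opposite facets of the same dimension cancel on the orientation. The net operation therefore is a single facet flip for each dimension in some set $S$, where (as argued in the paragraph preceding \Cref{lemma:flipping}) $S$ is exactly a root-rooted path in $G_{\pi'}$ that uses only direct edges. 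Combining \Cref{observation:flippingchangesgraph} with \Cref{lemma:flipping} then shows that the dimension influence graph of the resulting orientation is again the transitive closure of a branching, so the USO is still \matousek{}-type.

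For the reverse inclusion, consider a realizable \matousek{}-type USO $u$. By definition $u$ is isomorphic, via some permutation $\pi^*$ and flip set $F^*$, to a \matousek{} USO $m$; since USO isomorphism preserves realizability, $m$ is realizable too. The characterization theorem at the end of \Cref{lemma:forbidden}'s subsection then gives a simple extension $\hat{\mathcal{M}}_0$ with $q$ last in $\pi$ whose associated USO is $m$. To produce a simple extension that realizes $u$ itself, I would absorb $\pi^*$ into a relabeling of the $n$ complementary pairs of $\hat{\mathcal{M}}_0$ (equivalently, into $\pi$), and absorb the mirroring induced by $F^*$ into the flip set $F$ on the corresponding complementary pairs. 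Both manipulations preserve the conditions of \Cref{thm:correcteddef} on $E_{2n}$, so the result is still a simple extension of a cyclic-P-matroid, and by construction its associated USO is $u$.

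The main obstacle I expect is the bookkeeping in the forward direction: carefully tracking, as $q$ crosses $\pi$, which facet is flipped at each step; checking that two consecutive opposite-facet flips of the same dimension really cancel at the level of the orientation (and not merely at the level of the dimension influence graph); and verifying that the surviving set $S$ of single-facet flips forms precisely a root-to-leaf path in $G_{\pi'}$ using only direct edges, so that \Cref{lemma:flipping} applies. Once these facet-level details are nailed down, \Cref{observation:flippingchangesgraph} and \Cref{lemma:flipping} chain to give the \matousek{}-type conclusion, and the reverse inclusion is a comparatively short isomorphism-transfer argument on top of the earlier $q$-at-end characterization.
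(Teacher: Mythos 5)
Your proposal is correct and follows essentially the same route as the paper: the forward inclusion via \Cref{lemma:qatend} plus \Cref{observation:pushingisflipping}, \Cref{observation:flippingchangesgraph} and \Cref{lemma:flipping}, and the reverse inclusion by transferring the isomorphism into the choice of $\pi$ and $F$ of a $q$-at-end extension (a step the paper leaves implicit). One small caveat: flipping both the lower and the upper $i$-facet does not literally cancel on the orientation---it complements every outmap in the dimensions other than $i$, yielding a mirrored (isomorphic) USO---but it does cancel on the dimension influence graph, which is all the set-level statement needs since the class of realizable \matousek{}-type USOs and the class of USOs arising from simple extensions are both closed under such mirrorings.
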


We end with the following remark about flipping facets in non-realizable \matousek{} USOs:
\begin{remark}
The operation of flipping facets can be applied to non-realizable \matousek{} USOs too. If this makes the dimension influence graph cyclic, the result is \textbf{not} a USO. If the dimension influence graph stays acyclic, the result is simply a non-realizable \matousek{}-type USO again.
\end{remark}

\newpage
\section{Conclusion}

We have corrected the characterization of cyclic-P-matroids from \cite{fukuda2013subclass,klaus2012phd} and have then shown that the simple extensions of cyclic-P-matroids can induce all realizable \matousek{} USOs and USOs isomorphic to them. By doing this, we have also characterized the set of realizable \matousek{} USOs, and found concrete realizations for all of them.

In the future, it would be interesting to also understand non-simple extensions of cyclic-P-matroids and their associated USOs. These extensions could be computed from the set of uniform extensions of the alternating matroid $\mathcal{A}^{2n,n}$. As Ziegler showed \cite{ziegler1993bruhat}, this set is isomorphic to the higher Bruhat order $B(2n,n)$ which we cannot efficiently enumerate yet. Maybe we can find properties of non-simple extensions of cyclic-P-matroids without having to solve this problem.

Furthermore, we would like to know whether the Holt-Klee condition is also necessary (or even necessary and sufficient) for USOs stemming from P-matroid extensions.

\bibliography{literature.bib}
\bibliographystyle{plain}

\end{document}